\begin{document}

\title{Structure in sets with logarithmic doubling}

\author{T. Sanders}
\address{Department of Pure Mathematics and Mathematical Statistics\\
University of Cambridge\\
Wilberforce Road\\
Cambridge CB3 0WB\\
England } \email{t.sanders@dpmms.cam.ac.uk}

\begin{abstract}
Suppose that $G$ is an abelian group, $A \subset G$ is finite with $|A+A| \leq K|A|$ and $\eta \in (0,1]$ is a parameter.  Our main result is that there is a set $\mathcal{L}$ such that
\begin{equation*}
|A \cap \Span(\mathcal{L})| \geq K^{-O_\eta(1)}|A| \textrm{ and } |\mathcal{L}| = O(K^\eta\log |A|).
\end{equation*}
We include an application of this result to a generalisation of the Roth-Meshulam theorem due to Liu and Spencer.
\end{abstract}

\maketitle

\section{Introduction}

Suppose that $G$ is an abelian group.  We are interested in the structure of sets with small doubling, the prototypical examples of which are coset progressions.  A set $M$ is a \emph{$d$-dimensional coset progression} if it can be written in the form
\begin{equation*}
M=H+P_1+\dots+P_d
\end{equation*}
where $H \leq G$ and $P_1,\dots,P_d$ are arithmetic progressions.  It is easy to see that if $A$ is a proportion $\delta$ of a $d$-dimensional coset progression then $|A+A| \leq \delta^{-1}2^d|A|$ -- $A$ has `small doubling'.  Remarkably there is something of a converse to this.
\begin{theorem}[Green-Ruzsa-Fre{\u\i}man theorem]\label{thm.grf}
Suppose that $G$ is an abelian group and $A \subset G$ has $|A+A| \leq K|A|$. Then there is an $O_\varepsilon(K^{4+\varepsilon})$-dimensional coset progression $M$ such that $A \subset M$ and $|M| \leq \exp(O_\varepsilon(K^{4+\varepsilon}))|A|$.
\end{theorem}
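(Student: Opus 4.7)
My plan is to follow the classical Bogolyubov--Ruzsa strategy: first locate a coset progression $M$ inside the iterated sumset $2A-2A$, and then transport this structure to cover $A$ via Ruzsa's covering lemma. The key technical ingredients are Pl\"unnecke--Ruzsa, Ruzsa's modelling lemma, Bogolyubov's Fourier-analytic lemma, Chang's theorem on large spectra, and Minkowski's second theorem from the geometry of numbers.

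Concretely, I would first apply Pl\"unnecke--Ruzsa to upgrade the hypothesis to $|kA - \ell A| \leq K^{k+\ell}|A|$ for all $k,\ell \geq 1$; in particular $|2A-2A| \leq K^4|A|$, so the iterated difference set already has small doubling. Next, Ruzsa's modelling lemma embeds $A$ (up to a Freiman $8$-isomorphism) into a finite abelian group $H$ with $|H| \leq K^{O(1)}|A|$, so the image $A'$ has density at least $\alpha := K^{-O(1)}$ in $H$, which puts Fourier analysis at our disposal. Bogolyubov's lemma applied to $A'$ then produces a Bohr set $B(\Gamma, 1/4) \subseteq 2A'-2A'$ whose frequency set $\Gamma = \operatorname{Spec}_{\alpha^{1/2}}(1_{A'}) \subseteq \widehat{H}$ satisfies $|\Gamma| \leq 1/\alpha \leq K^{O(1)}$ by Parseval. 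Chang's theorem next replaces $\Gamma$ by a dissociated set $\Lambda$ of size $O_\varepsilon(K^{4+\varepsilon})$ whose $\{-1,0,1\}$-span contains $\Gamma$, from which a standard Dirichlet-type argument yields $B(\Lambda, c) \subseteq B(\Gamma, 1/4) \subseteq 2A'-2A'$ for a suitable absolute constant $c$. Applying Minkowski's second theorem inside $B(\Lambda, c)$ locates a coset progression $M'$ of dimension $|\Lambda| = O_\varepsilon(K^{4+\varepsilon})$ and size at least $\exp(-O_\varepsilon(K^{4+\varepsilon}))|A'|$; pulling $M'$ back across the Freiman isomorphism produces the desired $M \subseteq 2A-2A$ with the same parameters.

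To finish, Pl\"unnecke--Ruzsa gives $|A+M|/|M| \leq |3A-2A|/|M| \leq \exp(O_\varepsilon(K^{4+\varepsilon}))$, so Ruzsa's covering lemma yields a set $X$ of size at most $\exp(O_\varepsilon(K^{4+\varepsilon}))$ with $A \subseteq X + M - M$; the difference set $M-M$ is itself a coset progression of the same dimension as $M$, and $X$ must be absorbed into the ambient progression to produce a single coset progression containing $A$. The main obstacle I anticipate is the Chang--geometry-of-numbers step, where one must simultaneously extract both a small dimension bound and a not-too-small volume bound from the Bohr set; the exponent $4+\varepsilon$ is delicate and reflects the interplay between the $K^4$ loss from Pl\"unnecke--Ruzsa on $2A-2A$ and the logarithmic overhead in Chang's theorem. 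A secondary subtlety is the final absorption: arbitrary sets of size $N$ cannot be embedded in coset progressions of dimension $\log N$, so this step must exploit the small-doubling structure that $X$ inherits from sitting inside $3A-2A$ (for instance by iterating the covering argument, or by working in the appropriate quotient) to fold $X$ into the progression without degrading the dimension bound.
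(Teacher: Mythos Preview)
The paper does not prove this theorem; it is stated with a citation to Green and Ruzsa and included only to set the scene, so there is no proof in the paper to compare against. Your outline is essentially the Green--Ruzsa strategy, and the ingredients you name (Pl\"unnecke, Ruzsa modelling, Bogolyubov, Chang's spectral lemma, geometry of numbers, covering) are the right ones.

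That said, the step you label a ``secondary subtlety'' is actually the main unresolved gap. From the geometry-of-numbers step one only obtains $|M| \geq \exp(-O_\varepsilon(K^{4+\varepsilon}))|A|$, so Ruzsa's covering lemma gives $|X| \leq \exp(O_\varepsilon(K^{4+\varepsilon}))$, and absorbing $X$ as new directions of the progression would cost $|X|$ extra dimensions, destroying the bound you are aiming for. Your suggestion to iterate the covering or pass to a quotient points in the right direction, but as written it is a hope rather than an argument: the actual resolution (an iterated covering argument due to Chang, using Pl\"unnecke at each stage to force the covering numbers down to $K^{O(1)}$) is a genuine additional ingredient that must be supplied, not a routine cleanup. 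A smaller issue is that your claimed bound $|\Lambda| = O_\varepsilon(K^{4+\varepsilon})$ from Chang's spectral lemma presupposes density at least $K^{-4-o(1)}$ in the model group, whereas a na\"{\i}ve Freiman $8$-isomorphism only places $A$ in a group of size up to $K^{16}|A|$; pushing the exponent down to $4$ also requires care in the modelling step that your unspecified ``$K^{O(1)}$'' conceals.
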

This result is due to Green and Ruzsa \cite{greruz::0} building on Ruzsa's proof \cite{ruz::9} of Fre{\u\i}man's theorem \cite{fre::0} in the integers.  There are other proofs (see \cite{taovu::} for example) and a large body of literature which we shall not survey here.

Whilst this resolves the situation from a qualitative perspective, quantitatively things are far less well understood.  In \cite{shk::3} Shkredov noticed that one may hope to say something quantitatively stronger if one changes the structure sought to that of spans: recall that if $\mathcal{L} \subset G$ then 
\begin{equation*}
\Span(\mathcal{L}):=\{\sum_{x \in \mathcal{L}}{\sigma_x.x}: \sigma_x \in \{-1,0,1\} \textrm{ for all } x \in \mathcal{L}\}.
\end{equation*}
With this notation Shkredov established the following theorem.
\begin{theorem}
Suppose that $G$ is an abelian group and $A \subset G$ has $|A+A| \leq K|A|$.  Then there is a set $\mathcal{L}$ such that
\begin{equation*}
A \subset \Span(\mathcal{L}) \textrm{ and }|\mathcal{L}| = O(K \log |A|).
\end{equation*}
\end{theorem}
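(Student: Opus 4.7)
The plan is to take $\mathcal{L}$ to be a maximal dissociated subset $D$ of $A$, and bound $|D|$ by playing off the many distinct sums produced by dissociation against the Plünnecke--Ruzsa upper bound on $|kA|$.

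First, recall that $D$ is \emph{dissociated} if $\sum_{d \in D}\sigma_d d = 0$ with $\sigma_d \in \{-1,0,1\}$ forces every $\sigma_d$ to vanish. I would let $D \subset A$ be dissociated of maximal size; the containment $A \subset \Span(D)$ is then automatic. Indeed, for any $a \in A \setminus D$, maximality means $D \cup \{a\}$ admits a non-trivial $\{-1,0,1\}$-relation, and this relation must involve $a$ (otherwise it would already be a relation in $D$), so solving for $a$ places it in $\Span(D)$. It therefore suffices to show $|D| = O(K\log|A|)$.

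Next I would exploit dissociation to count subset sums in $kA$. Distinct subsets of $D$ yield distinct sums (subset sums being a special case of $\{-1,0,1\}$-sums), so the $\binom{|D|}{k}$ choices of a $k$-element $S \subset D$ give $\binom{|D|}{k}$ distinct elements $\sum_{d \in S} d \in kA$. Plünnecke--Ruzsa applied to $|A+A|\leq K|A|$ gives $|kA| \leq K^k|A|$, so
\begin{equation*}
\binom{|D|}{k} \leq K^k |A|.
\end{equation*}
Using $\binom{|D|}{k} \geq (|D|/k)^k$ (for $k \leq |D|$, the only relevant range) this rearranges to $|D| \leq kK|A|^{1/k}$.

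Finally I would optimise by taking $k = \lceil \log |A| \rceil$, so that $|A|^{1/k} = O(1)$ and the bound becomes $|D| = O(K\log|A|)$; the case $|D| < \log|A|$ is vacuous. I do not expect a serious obstacle here: the proof is driven entirely by the tension between the combinatorial lower bound $\binom{|D|}{k} \leq |kA|$ coming from dissociation and the Plünnecke upper bound coming from small doubling, and the logarithmic factor appears naturally from balancing $K^k$ against $|A|^{1/k}$. The only input one has to trust is Plünnecke--Ruzsa in the iterated form $|kA| \leq K^k|A|$, which is standard.
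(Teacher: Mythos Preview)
Your argument is correct, and it is a genuinely different route from the one the paper takes.  The paper obtains this statement as the special case $B=A$ of Proposition~\ref{prop.shkasym}, whose proof is Fourier-analytic: one builds an auxiliary function $g$ on $\wh{G}$ whose large spectrum contains $A$, and then invokes Chang's theorem to place that spectrum inside a span of the right size.  Your approach bypasses the Fourier transform and Chang's theorem entirely, working instead with a maximal dissociated subset of $A$ and the elementary tension between the lower bound $\binom{|D|}{k}\leq |kA|$ and the Pl{\"u}nnecke upper bound $|kA|\leq K^k|A|$.  What your method buys is simplicity: it is self-contained modulo Pl{\"u}nnecke--Ruzsa, and it makes transparent where the $K\log|A|$ comes from.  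What the paper's method buys is that it sits naturally alongside the other spectral results (Chang's theorem for symmetry sets, the $\ell^2$-increment lemma) used later, and the auxiliary-function trick is itself reusable.  It is worth noting that your argument also extends to the asymmetric statement of Proposition~\ref{prop.shkasym}: if $|B+A|\leq K|A|$ then Pl{\"u}nnecke gives $|kB|\leq K^k|A|$, and a maximal dissociated subset $D\subset B$ satisfies $\binom{|D|}{k}\leq |kB|$, so the same optimisation yields $|D|=O(K\log|A|)$ with $B\subset\Span(D)$.
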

Of course a span is a type of coset progression and so once $K$ is about $\log^{1/3} |A|$ the above result supersedes  the Green-Ruzsa-Fre{\u\i}man theorem. 

As it stands the result is essentially best possible -- consider a set $A$ of $K$ generic points.  However,  if one weakens the containment hypothesis to merely correlation then one can hope to do better and to this end we shall prove the following.
\begin{theorem}\label{thm.subexpshk}
Suppose that $G$ is an abelian group, $A \subset G$ has $|A+A| \leq K|A|$ and $\eta \in (0,1]$.  Then there is a set $\mathcal{L}$ such that
\begin{equation*}
|A \cap \Span(\mathcal{L})| \geq K^{-O(\exp (O(\eta^{-1})))}|A| \textrm{ and }|\mathcal{L}| = O(K^{\eta} \log |A|).
\end{equation*}
\end{theorem}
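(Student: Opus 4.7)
The plan is to bootstrap Shkredov's containment theorem by iteratively passing to a large subset of $A$ of smaller doubling, and then applying the containment theorem only at the very end. The basic loop is: given $A_i \subseteq A$ with $|A_i + A_i| \leq K_i |A_i|$, find $A_{i+1} \subseteq A_i$ with substantially smaller doubling $K_{i+1}$ and density $|A_{i+1}|/|A_i|$ controlled by $K_i$; terminate once $K_j \leq K^\eta$ and apply Shkredov's theorem to $A_j$. The output $\mathcal{L}$ then satisfies $|\mathcal{L}| \ll K_j \log|A_j| \leq K^\eta\log|A|$ and $|A \cap \Span(\mathcal{L})| \geq |A_j|$, so only the cumulative density loss $|A_j|/|A|$ needs to be tracked.

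The heart of the argument is the \emph{reduction step}. The natural tools are almost-periodicity \`a la Croot--Sisask applied to $1_{A_i}\ast 1_{-A_i}$, together with a Chang--Bogolyubov extraction of a Bohr structure inside $A_i - A_i$. Concretely, one looks for a large symmetric set $T$ of $\ell^p$-almost-periods of $1_{A_i}\ast 1_{-A_i}$; pulling back the subgroup-like structure of $T$ and pigeonholing over cosets produces a piece $A_{i+1} \subseteq A_i$ captured by a thicker coset-progression-like object. Applying Pl\"unnecke--Ruzsa to control the doubling of this piece, one should be able to arrange $K_{i+1} \leq K_i^{1-c}$ for some absolute constant $c > 0$, at a density cost of the form $K_i^{-O(1)}$.

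Iterating $j = O(\log(1/\eta))$ times then drives $K_j$ below $K^\eta$, and a careful bookkeeping of the compounded per-step losses gives $|A_j| \geq K^{-O(\exp(O(\eta^{-1})))}|A|$. The exponential-in-$\eta^{-1}$ dependence is not an accident of the analysis: at the last few stages of the iteration the available doubling $K_i$ has already become very small, and to force it to drop further the almost-periodicity parameters must be pushed into a regime where the Croot--Sisask-type losses blow up sharply; these tail-stage losses dominate the final budget.

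The principal obstacle is engineering the reduction step so that a genuine constant-factor contraction $(1-c)$ in the doubling \emph{exponent} is achieved while the density cost remains polynomial in the current doubling $K_i$, rather than in the original $K$. A naive almost-periodicity argument would lose a factor like $K_i^{-O(\log K_i)}$ per step, which destroys the final bound. Making the contraction and the density accounting compatible at every scale --- and in particular using the structure of $\Span(\mathcal{L}_i)$ produced by an intermediate application of the containment theorem to bound the error terms in the next almost-periodicity step --- should be the delicate technical work underlying the theorem.
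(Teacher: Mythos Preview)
Your proposal is not a proof but a plan with a self-acknowledged central gap: the ``reduction step'' producing $A_{i+1}\subset A_i$ with $K_{i+1}\leq K_i^{1-c}$ at density cost $K_i^{-O(1)}$. You do not establish this step, and no such lemma is known. Moreover, if it held as stated, the geometric sum $\sum_i (1-c)^i = O(1)$ would give total density loss only $K^{-O(1)}$, \emph{independent of $\eta$} --- vastly stronger than the theorem and essentially a form of polynomial Fre{\u\i}man--Ruzsa. You seem to notice this inconsistency and retreat to a vague claim that ``tail-stage losses dominate,'' but then no quantitative reduction step remains on the table at all. The tools you invoke (Croot--Sisask almost-periodicity, Chang--Bogolyubov) yield Bohr-type structure inside iterated sumsets of $A$; they do not hand you a subset of $A$ itself with strictly smaller doubling constant.

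The paper's argument is entirely different and involves no iteration on subsets of $A$. It invokes a single black-box structural result (Proposition~\ref{prop.core}, quoted from \cite{san::03}): given $|A+A|\leq K|A|$ and a parameter $\epsilon$, there is a non-empty $A'\subset A$ with $|\Sym_{1-\epsilon}(A'+A)| \geq \exp(-K^{O(1/\log(1/(1-\epsilon)))}\log K)|A|$. Taking $1-\epsilon = K^{-\eta/2}$ produces a symmetry set $S=\Sym_{K^{-\eta/2}}(A'+A)$ of size at least $K^{-\exp(O(\eta^{-1}))}|A|$; the exponential dependence on $\eta^{-1}$ comes entirely from this one input, not from compounded iteration losses. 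Chang's theorem for symmetry sets then places $S$ inside $\Span(\mathcal{L})$ with $|\mathcal{L}|=O(K^\eta\log|A|)$, and a short Pl{\"u}nnecke/H{\"o}lder computation shows that some translate of $S$ meets $A$ in at least $K^{-O(1)}|S|$ points, finishing the proof.
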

The reader may wish to compare this with the (much stronger) polynomial Fre{\u\i}man-Ruzsa conjecture.

To illustrate the utility of Theorem \ref{thm.subexpshk} we address a natural generalisation of the Roth-Meshulam theorem \cite{mes::} considered by Liu and Spencer in \cite{liuspe::}. 
\begin{theorem}\label{thm.cvs}
Suppose that $\F$ is a finite field, $G:=\F^n$, $c_1,\dots,c_r \in \F^*$ are such that $c_1+\dots+c_r=0$, and $A \subset G$ contains no solutions to $c_1.x_1+\dots+ c_r.x_r=0$ with $x_1,\dots,x_r \in A$ pair-wise distinct.  Then
\begin{equation*}
|A|=O_{|\F|,r}(|\F|^n/n^{r-2}).
\end{equation*}
\end{theorem}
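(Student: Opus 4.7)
The plan is to prove Theorem~\ref{thm.cvs} via a Fourier-analytic density-increment argument in the style of Meshulam, where the advantage over the three-term case comes from an $r$-fold H\"older inequality that brings $r-2$ powers of $\|\widehat{1_A}\|_\infty$ to bear. Setting $\delta:=|A|/|G|$ and using the normalised Fourier transform $\widehat{f}(\gamma):=|G|^{-1}\sum_x f(x)\chi(\gamma\cdot x)$ on $G=\F^n$, the first step is the identity
\begin{equation*}
\frac{1}{|G|^{r-1}}\#\{\vec x\in A^r: c_1x_1+\dots+c_rx_r=0\}=\delta^r+\sum_{\gamma\neq 0}\prod_{i=1}^r\widehat{1_A}(c_i\gamma),
\end{equation*}
which uses only $c_1+\dots+c_r=0$. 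I would then bound the quantity on the left by its \emph{trivial} contribution (tuples with $x_i=x_j$ for some $i\neq j$); a case split on whether $c_i+c_j=0$---reducing to an $(r-2)$-term equation when it does---gives $O_{|\F|,r}(|A|^{r-2})$, so the left-hand side is $O_r(\delta^{r-2}/|G|)=o(\delta^r)$ once $\delta\gg|G|^{-1/2}$, which holds comfortably in the range $\delta\gtrsim n^{-(r-2)}$ once $n$ is large.

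Forcing the off-zero Fourier sum to have size at least $\delta^r/2$, I would next apply H\"older with Parseval: with $\eta:=\max_{\gamma\neq 0}|\widehat{1_A}(\gamma)|$ and using the bijection $\gamma\mapsto c\gamma$ on $\widehat{G}$ for $c\in\F^*$, one obtains
\begin{equation*}
\sum_{\gamma\neq 0}\prod_{i=1}^r|\widehat{1_A}(c_i\gamma)|\le \eta^{r-2}\sum_{\gamma}|\widehat{1_A}(\gamma)|^2=\eta^{r-2}\delta,
\end{equation*}
forcing $\eta\ge c_r\delta^{(r-1)/(r-2)}$. Pigeonholing over the $|\F|$ level sets of the corresponding character produces an affine hyperplane $V$ in which $A$ has density at least $\delta(1+c_{r,|\F|}\delta^{1/(r-2)})$; because $\sum c_i=0$, any solution-free set translates to a solution-free set in $V-v_0\cong\F^{n-1}$, so one can iterate. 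Tracking $\delta^{-1/(r-2)}$ shows that it decreases by a positive constant per step, so the density reaches $1$ after $O_r(\delta_0^{-1/(r-2)})$ steps in total; since each step costs one dimension, $n\gtrsim\delta_0^{-1/(r-2)}$, i.e.\ $\delta_0=O_{r,|\F|}(n^{-(r-2)})$, as required.

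Theorem~\ref{thm.subexpshk} would enter in refining the density-increment step: since $A$ has doubling at most $\delta^{-1}$, applying the theorem with a small parameter $\kappa$ produces a set $\mathcal{L}$ of size $O(\delta^{-\kappa}\log|A|)$ whose span captures a $\delta^{O(\exp(O(\kappa^{-1})))}$-fraction of $A$, giving a more efficient iteration than the repeated codimension-one passes above. The main technical obstacle of the proof as sketched is the trivial-solution bound when the coefficients admit many vanishing sub-sums $\sum_{i\in I}c_i=0$ (most severely for $r$ even with the $c_i$ pairing into cancelling pairs): this is handled by induction on $r$ via the Fourier argument applied to the shorter equations that appear, and the care needed to keep the $|\F|$- and $r$-dependence transparent forms the delicate part.
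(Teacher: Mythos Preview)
Your first three paragraphs constitute a correct proof of Theorem~\ref{thm.cvs}, and this is exactly the Liu--Spencer argument that the paper \emph{cites} rather than proves: Theorem~\ref{thm.cvs} is quoted as known, and the paper's own work goes into the improvement, Theorem~\ref{thm.mainapp}. Your concern about vanishing sub-sums in the trivial-solution count is overblown at this level of precision: for $r\ge 3$ and any pair $i<j$, the tuples with $x_i=x_j$ satisfy $(c_i+c_j)x_i+\sum_{k\neq i,j}c_kx_k=0$, which is a nontrivial linear equation in $r-1$ variables because some $c_k$ with $k\neq i,j$ lies in $\F^*$; hence there are at most $|A|^{r-2}$ such tuples regardless of whether $c_i+c_j=0$, and summing over pairs gives the $O_r(|A|^{r-2})$ you need with no induction.

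Your final paragraph, however, misreads the role of Theorem~\ref{thm.subexpshk}. It is not used for Theorem~\ref{thm.cvs} at all, and applying it directly to $A$ via the trivial bound $|A+A|\le|G|=\delta^{-1}|A|$ is vacuous: the resulting set $\mathcal{L}$ has size $O(\delta^{-\kappa}\log|A|)=O(\delta^{-\kappa}n\log|\F|)$, which for any fixed $\kappa>0$ is already of order $n$, so the subspace generated by $\mathcal{L}$ may be all of $G$ and no increment follows. In the paper the theorem is applied not to $A$ but to (a subset of) the \emph{large spectrum} of $1_A$: one first shows (Proposition~\ref{prop.shk}) that $\Spec_\epsilon(1_A)$ has large additive energy, passes by Balog--Szemer\'edi--Gowers to a subset $S'$ with genuinely small doubling $K=\alpha^{-O(1/r)}$, and only then invokes Theorem~\ref{thm.subexpshk} on $S'$. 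This produces a low-dimensional subspace $W\le\wh G$ carrying many large Fourier coefficients, and an $\ell^2$-increment (Lemma~\ref{lem.structincl2}) onto $W^\perp$ gives a one-shot jump to density $\Omega(\alpha^{1/2})$. Combining that with the sharper $\ell^\infty$-increment is what yields the extra exponent $E(r)$ in Theorem~\ref{thm.mainapp}; none of this is needed for, or relevant to, Theorem~\ref{thm.cvs}.
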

The requirement that the elements be pair-wise distinct rules out degenerate solutions introduced by having shorter sub-sums of the $c_i$s equal to zero.  Nevertheless, it should be noted that for a number of special equations better bounds are available.  For example, if $c_i=-c_{r-i}$ and $r$ is even then a simple application of the Cauchy-Schwarz inequality will give a power shaped saving in the bound on $|A|$.  The different `types' of equation are given a comprehensive analysis by Ruzsa in \cite{ruz::8} -- we shall not address ourselves to this problem here.

The result above is a special case of the work of Liu and Spencer from \cite{liuspe::} who considered $r$-fold sums in arbitrary abelian groups and (along with Zhao) generalised it further to systems of equations of complexity $1$ in \cite{liuspezha::}. 

Improving the bound in Theorem \ref{thm.cvs} in the case $r=3$ (and $|\F|=3$) is a well known open problem sometimes called the capset problem, as discussed in \cite{gre::9,crolev::,tao::5}.  We shall use Theorem \ref{thm.subexpshk} to show that there is a non-negative valued function $E(r)$ with
\begin{equation*}
E(r)=\Omega(\log r) \textrm{ for all } r \textrm{ sufficiently large,}
\end{equation*}
such that the following theorem holds.
\begin{theorem}\label{thm.mainapp}
Suppose that $\F$ is a finite field, $G:=\F^n$, $c_1,\dots,c_r \in \F^*$ are such that $c_1+\dots+c_r=0$, and $A \subset G$ contains no solutions to $c_1.x_1+\dots+ c_r.x_r=0$ with $x_1,\dots,x_r \in A$ pair-wise distinct.  Then
\begin{equation*}
|A|=O_{|\F|,r}(|\F|^n/n^{r-2+E(r)}).
\end{equation*}
\end{theorem}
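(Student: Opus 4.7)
The plan is to refine the standard Fourier-analytic density-increment argument underlying Theorem \ref{thm.cvs} by invoking Theorem \ref{thm.subexpshk} to extract extra structure precisely when the elementary Fourier step is wasteful.

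First I would set up the standard framework. Writing $N=|\F|^n$ and $\alpha=|A|/N$, the count of solutions to $c_1x_1+\dots+c_rx_r=0$ with $x_i\in A$ is essentially $N^{r-1}\sum_{\gamma\in\F^n}\widehat{1_A}(c_1\gamma)\cdots\widehat{1_A}(c_r\gamma)$, and the hypothesis that only trivial solutions exist forces the off-zero Fourier mass to match the $\gamma=0$ term. Combined with Parseval this gives a dichotomy: either there is a single exceptionally large non-trivial Fourier coefficient, producing a density increment stronger than needed for Theorem \ref{thm.cvs} itself, or the large-spectrum level set $\Lambda:=\{\gamma\neq 0:|\widehat{1_A}(\gamma)|\geq \rho|A|\}$ is substantially larger, for a suitable $\rho$, than a random set of comparable density would be.

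In the latter case I would use Chang--Shkredov type estimates to show that $\Lambda$ has additive doubling at most some $K$ polynomial in $\rho^{-1}$ and $\log(1/\alpha)$, and then apply Theorem \ref{thm.subexpshk} to $\Lambda$ with a parameter $\eta$ to be optimised. This produces a set $\mathcal{L}$ of size $O(K^\eta\log|\Lambda|)$ such that a proportion $K^{-O(\exp(O(\eta^{-1})))}$ of $\Lambda$ lies in $\Span(\mathcal{L})$, and hence in a subspace $V$ of dimension at most $|\mathcal{L}|$. Dualising via Plancherel, $A$ correlates with a coset of the annihilator $V^\perp$, so we gain a density increment on a subspace of codimension $|\mathcal{L}|$ rather than on a single hyperplane. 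Iterating this increment inside the new subspace, and bookkeeping the cumulative dimension loss against the accumulated density gain, should produce a saving of $n^{E(r)}$ over the Liu--Spencer bound.

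The main obstacle is the parameter balancing. The correlation loss $K^{-O(\exp(O(\eta^{-1})))}$ in Theorem \ref{thm.subexpshk} blows up super-polynomially in $K$ as $\eta\to 0$, while the dimension spent per step, $|\mathcal{L}|$, grows as $K^\eta\log|A|$. One must choose $\eta$ small enough that $|\mathcal{L}|$ is much less than the dimension of the ambient subspace (so a genuine increment survives), yet large enough that the correlation loss does not swamp the gain. I expect the right choice is $\eta$ of order $1/\log r$, yielding $E(r)=\Omega(\log r)$ as advertised; verifying that this window is non-empty, and confirming that the resulting iteration can be sustained through enough rounds to reach the claimed saving, will be the bulk of the work.
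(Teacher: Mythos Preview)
Your outline is essentially the paper's own route: a Fourier dichotomy on the $r$th moment of $\widehat{1_A}$, structure in the large spectrum via Theorem~\ref{thm.subexpshk}, an $\ell^2$-type increment onto the annihilator of the resulting subspace, and the choice $\eta\sim 1/\log r$ all match.

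There is, however, one genuine gap. You assert that ``Chang--Shkredov type estimates show that $\Lambda$ has additive doubling at most some $K$'', but that is not what those estimates give. Shkredov's inequality (Proposition~\ref{prop.shk} in the paper) only yields a lower bound on the additive \emph{energy} $E(\Lambda)\geq\rho^{8}\alpha|\Lambda|^4$; a set with large energy need not have small doubling (a union of many translates of a small subgroup is the standard example), so Theorem~\ref{thm.subexpshk} cannot be applied to $\Lambda$ directly. The missing ingredient is the Balog--Szemer\'edi--Gowers theorem, which passes from large energy to a dense subset $\Lambda'\subset\Lambda$ with genuinely small doubling; it is to $\Lambda'$ that Theorem~\ref{thm.subexpshk} is applied. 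Only a $K^{-O_\eta(1)}$ fraction of $\Lambda'$ is captured by $\Span(\mathcal{L})$, and $\Lambda'$ is itself only a polynomial-in-$K$ fraction of $\Lambda$, but this still leaves enough $\ell^2$ Fourier mass on the subspace generated by $\mathcal{L}$ to run Lemma~\ref{lem.structincl2}. Your bookkeeping must pass through BSG, and the losses it introduces are exactly why the exponential in Theorem~\ref{thm.subexpshk} has to be absorbed by taking $M=\Omega(\log r)$.

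A second, smaller omission: in the paper's iteration the ``large correlation with a large-codimension subspace'' outcome is terminal rather than iterated. One passes to that subspace, where the density is already $\Omega(\alpha^{1/2})$, and then simply invokes the existing Liu--Spencer bound (Theorem~\ref{thm.cvs}) there to finish. Your sketch treats every increment uniformly; you will need to separate this endgame.
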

We emphasise that $E(r)$ only becomes non-zero once $r$ is sufficiently large; with some care this can be taken to be $2^{20}$.

The paper now splits as follows. In the next section, \S\ref{sec.ft}, we record the basics of the Fourier transform and structure of the spectrum which we require for the proof of Theorem \ref{thm.subexpshk}. In \S\ref{sec.asyms} we prove an asymmetric version of Shkredov's theorem, and then in \S\ref{sec.css} a symmetry set version of Chang's theorem.  These results are combined with a proposition from \cite{san::03} to prove Theorem \ref{thm.subexpshk} in \S\ref{sec.main}.  Following this we record some results from additive combinatorics which we require for our application in \S\ref{sec.addcomb}. Theorem \ref{thm.mainapp} is then established in \S\ref{sec.appln}.

It should be remarked that around the same time as this paper was written Schoen in \cite{sch::1} came up with a far better way of using the same ingredients to prove the first good bounds for a Fre{\u\i}man-Ruzsa-type theorem, and then a little later in an additional unpublished argument\footnote{Personal communication.} was able to improve Theorem \ref{thm.mainapp}.

\section{The {F}ourier transform and the large spectrum}\label{sec.ft}

A good introduction to the Fourier transform may be found in Rudin \cite{rud::1}, and for our work the more modern reference \cite{taovu::} of Tao and Vu.  Suppose that $G$ is a locally compact abelian group endowed with a Haar measure $\mu_G$. We define the convolution of two functions $f,g \in L^1(\mu_G)$ point-wise by
\begin{equation*}
f \ast g(x):=\int{f(y)g(-y+x)d\mu_G(y)},
\end{equation*}
and write $\wh{G}$ for the dual group, that is the locally compact abelian group of homomorphisms from $G$ to $S^1:=\{z \in \C: |z|=1\}$.  Convolution operators are diagonalized by the Fourier transform: we define the Fourier transform of a function $f \in L^1(\mu_G)$ by
\begin{equation*}
\wh{f}:\wh{G} \rightarrow \C; \gamma \mapsto \int{f(x)\overline{\gamma(x)}d\mu_G(x)}.
\end{equation*}
If we declare $G$ as discrete then we take $\mu_G$ to be counting measure (that is the measure assigning mass $1$ to every element of $G$) and if we declare $G$ as compact then we take $\mu_G$ to be $\P_G$ the unique Haar probability measure on $G$.  When $G$ is finite it will be clear from context which measure we take.

Suppose now that $G$ is compact and $f \in L^1(G)$. The Hausdorff-Young inequality tells us that $|\wh{f}(\gamma)| \leq \|f\|_{L^1(G)}$ and so it is natural to define the \emph{$\delta$-large spectrum of $f$} to be
\begin{equation*}
\Spec_\delta(f):=\{\gamma \in \wh{G}:|\wh{f}(\gamma)|\geq \delta \|f\|_{L^1(G)}\}.
\end{equation*}
Chang initiated work studying the structure of the spectrum in \cite{cha::0} and this has since been refined by Shkredov in \cite{shk::4}.
\begin{proposition}[Chang's theorem]
Suppose that $G$ is a compact abelian group, $f \in L^1(G)$ and $\delta \in (0,1]$ is a parameter.  Then there is a set $\mathcal{L}$ such that
\begin{equation*}
\Spec_\delta(f) \subset \Span(\mathcal{L}) \textrm{ and }|\mathcal{L}| =O(\delta^{-2}\log \|f\|_{L^2(G)}^2\|f\|_{L^1(G)}^{-2}).
\end{equation*}
\end{proposition}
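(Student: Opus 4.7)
The plan is to take $\mathcal{L}$ to be a maximal \emph{dissociated} subset of $\Spec_\delta(f)$, where a set $\mathcal{L} \subset \wh{G}$ is dissociated if the only choice of $\sigma_\gamma \in \{-1,0,1\}$ with $\sum_{\gamma \in \mathcal{L}} \sigma_\gamma \gamma$ equal to the trivial character is $\sigma_\gamma \equiv 0$. Maximality then delivers the inclusion $\Spec_\delta(f) \subset \Span(\mathcal{L})$ for free: for any $\gamma \in \Spec_\delta(f) \setminus \mathcal{L}$, the set $\mathcal{L} \cup \{\gamma\}$ is not dissociated, so there is a non-trivial $\{-1,0,1\}$-relation, and the coefficient of $\gamma$ in it cannot vanish (lest $\mathcal{L}$ itself already fail to be dissociated); rearranging expresses $\pm\gamma$, and hence $\gamma$, as an element of $\Span(\mathcal{L})$.

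The heart of the argument is to bound $|\mathcal{L}|$, and for this I would dualise against Rudin's inequality. For each $\gamma \in \mathcal{L}$ pick a unimodular phase $\omega_\gamma$ with $\omega_\gamma \wh{f}(\gamma) = |\wh{f}(\gamma)|$ and set $g:= \sum_{\gamma \in \mathcal{L}} \omega_\gamma \overline{\gamma}$. Unfolding the Fourier transform, then applying H\"older's inequality with conjugate exponents $p,p'$ for any $p \geq 2$, gives
\begin{equation*}
\delta \|f\|_{L^1(G)} |\mathcal{L}| \leq \sum_{\gamma \in \mathcal{L}} |\wh{f}(\gamma)| = \int_G f \cdot g\, d\mu_G \leq \|f\|_{L^{p'}(G)} \|g\|_{L^p(G)}.
\end{equation*}
Rudin's inequality for the dissociated set $\mathcal{L}$ then yields $\|g\|_{L^p(G)} \ll \sqrt{p}\, |\mathcal{L}|^{1/2}$, while log-convexity of $\|f\|_{L^q(G)}$ between $q=1$ and $q=2$ supplies $\|f\|_{L^{p'}(G)} \leq \|f\|_{L^1(G)}^{1-2/p}\|f\|_{L^2(G)}^{2/p}$.

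Rearranging, this becomes
\begin{equation*}
|\mathcal{L}|^{1/2} \ll \delta^{-1} \sqrt{p}\,\bigl(\|f\|_{L^2(G)}/\|f\|_{L^1(G)}\bigr)^{2/p},
\end{equation*}
and optimising by taking $p \asymp \log\bigl(\|f\|_{L^2(G)}^2 \|f\|_{L^1(G)}^{-2}\bigr)$ (say, chosen so that the last factor is $O(1)$, and reverting to $p=2$ if the logarithm is too small) produces the advertised bound. The only non-routine ingredient is Rudin's inequality itself, which will be the main obstacle for a reader unfamiliar with it; it is the classical input in any proof of Chang's theorem and is standard (see e.g.\ \cite{taovu::}). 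Beyond that, everything is maximality, Parseval-style duality, and a one-parameter optimisation.
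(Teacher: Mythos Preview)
The paper does not actually supply a proof of this proposition: it is quoted in \S\ref{sec.ft} as a known result, with the remark that the functional version ``can be read out of the proof in Chang's original paper,'' and in \S\ref{sec.addcomb} the paper notes only that it ``is proved using Rudin's inequality.'' Your argument is precisely the standard one the paper alludes to --- maximal dissociated subset for the containment, dualising against a Riesz product and invoking Rudin's inequality for the size bound, then optimising the H\"older exponent --- and it is correct as written.
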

The functional version of this result can be read out of the proof in Chang's original paper but was popularised by Green.  

\section{An asymmetric version of Shkredov's theorem}\label{sec.asyms}

In this section we use Chang's theorem to prove the following asymmetric version of Shkredov's theorem.  The key idea is the introduction of a certain auxiliary function, which is a trick used in \cite[Theorem 6.10]{lopros::} for proving a result on very similar lines.
\begin{proposition}\label{prop.shkasym}
Suppose that $G$ is a discrete abelian group and $A \subset G$ is a finite non-empty
set with $|B+A| \leq K|A|$. Then there is a set $\mathcal{L}$ such that
\begin{equation*}
B \subset \Span(\mathcal{L}) \textrm{ and }|\mathcal{L}| = O(K \log |A|).
\end{equation*}
\end{proposition}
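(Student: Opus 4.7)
The plan is to apply Chang's theorem to the Fourier transform of a suitably chosen auxiliary function. Setting $\mu_A := |A|^{-1}1_A$ and $\mu_{-A} := |A|^{-1}1_{-A}$, I would define
\[
\phi := 1_{B+A} \ast \mu_{-A}.
\]
Because $b + A \subset B + A$ for every $b \in B$, a direct computation gives $\phi(b) = |A|^{-1}|(B+A)\cap(b+A)| = 1$, while $0 \leq \phi \leq 1$ pointwise and $\|\phi\|_{L^1(G)} = |B+A| \leq K|A|$.

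The key analytic step -- the \emph{auxiliary function trick} -- is a sharp bound on $\|\wh{\phi}\|_{L^1(\wh G)}$ obtained by factorising $\wh{\phi} = \wh{1_{B+A}} \cdot \overline{\wh{\mu_A}}$ and applying Cauchy-Schwarz together with Parseval:
\[
\|\wh{\phi}\|_{L^1(\wh G)} \leq \|\wh{1_{B+A}}\|_{L^2(\wh G)}\,\|\wh{\mu_A}\|_{L^2(\wh G)} = \sqrt{|B+A|} \cdot |A|^{-1/2} \leq \sqrt{K}.
\]
The naive bound $\|\wh{\phi}\|_{L^1(\wh G)} \leq \|\wh{\phi}\|_{L^2(\wh G)} = \|\phi\|_{L^2(G)} \leq \sqrt{K|A|}$ would be too weak by a factor of $\sqrt{|A|}$; the extra $\overline{\wh{\mu_A}}$ factor gained by convolving with $\mu_{-A}$ is precisely what removes this loss.

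With this in hand I would apply Chang's theorem to $F := \wh{\phi} \in L^1(\wh G)$ at the level $\delta := 1/\|F\|_{L^1(\wh G)} \geq 1/\sqrt{K}$. Fourier inversion yields $\wh{F}(x) = \phi(-x)$, so $\phi(b) = 1 = \delta\|F\|_{L^1}$ for each $b \in B$ shows $-B \subset \Spec_\delta(F)$; Chang then produces $\mathcal{L} \subset G$ with $-B \subset \Span(\mathcal{L})$ and
\[
|\mathcal{L}| = O\bigl(\delta^{-2} \log(\|F\|_{L^2}^2/\|F\|_{L^1}^2)\bigr).
\]
Here $\delta^{-2} = \|\wh{\phi}\|_{L^1}^2 \leq K$, and $\|F\|_{L^2}^2 = \|\phi\|_{L^2}^2 \leq \|\phi\|_\infty \|\phi\|_{L^1} \leq K|A|$. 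Since $x \mapsto x\log(K|A|/x)$ is monotone increasing on $(0, K|A|/e]$, substituting $x = K$ gives $|\mathcal{L}| = O(K\log|A|)$, modulo edge cases with $|A|$ bounded. As $\Span(\mathcal{L})$ is closed under negation, $B \subset \Span(\mathcal{L})$ follows.

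The substantive insight is spotting the auxiliary function $\phi$ and recognising that Cauchy-Schwarz on its Fourier factorisation saves the crucial factor of $\sqrt{|A|}$ needed for Chang to deliver $O(K\log|A|)$ rather than $O(K|A|\log|A|)$. The remainder of the argument is routine Fourier-analytic bookkeeping on the discrete/compact dual pair $(G, \wh G)$.
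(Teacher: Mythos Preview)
Your proof is correct and follows essentially the same route as the paper's: the auxiliary function $\phi=1_{B+A}\ast\mu_{-A}$ is (up to the normalising factor $|A|^{-1}$) the same object as the paper's $g=hk$ with $\wh{h}=1_{B+A}$, $\wh{k}=1_{-A}$, and your Cauchy--Schwarz/Parseval bound on $\|\wh\phi\|_{L^1}$ together with the H\"older bound $\|\phi\|_{L^2}^2\leq\|\phi\|_\infty\|\phi\|_{L^1}$ match the paper's two estimates exactly. The only cosmetic differences are your normalisation by $|A|$ and the sign flip $-B\subset\Spec_\delta(F)$, which you correctly dispose of via the symmetry of $\Span(\mathcal{L})$.
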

\begin{proof}
Throughout this proof the Fourier transform is the Fourier transform on the compact group $\wh{G}$.  

Define $h$ and $k$ by inversion so that $\wh{h}=1_{B+A}$ and $\wh{k}=1_{-A}$, and put $g:=hk$.  If $x \in B$ then $1_{B+A} \ast 1_{-A}(x) =|A|$, so 
\begin{equation*}
B \subset \{x: 1_{B+A} \ast 1_{-A}(x) \geq |A|\} = \Spec_{|A|/\|g\|_{L^1(\wh{G})}}(g).
\end{equation*}
Applying Chang's theorem to this we get a set $\mathcal{L}$ such that
\begin{equation*}
B \subset \Span(\mathcal{L}) \textrm{ and } |\mathcal{L}| = O(\|g\|_{L^1(\wh{G})}^2|A|^{-2}\log \|g\|_{L^2(\wh{G})}^2\|g\|_{L^1(\wh{G})}^{-2}).
\end{equation*}
This is an increasing function of $\|g\|_{L^1(\wh{G})}$ and $\|g\|_{L^2(\wh{G})}$ so it remains to provide upper bounds for these quantities.  First,
\begin{eqnarray*}
\|g\|_{L^1(\wh{G})} & = & \int{|hk|d\P_{\wh{G}}}\\ & \leq &\|h\|_{L^2(\wh{G})}\|k\|_{L^2(\wh{G})}\\ & = & \sqrt{|B+A|.|-A|} \leq \sqrt{K}|A|,
\end{eqnarray*}
by the Cauchy-Schwarz inequality and Parseval's theorem.  Secondly
\begin{eqnarray*}
\|g\|_{L^2(\wh{G})}^2 & = &\|1_{B+A} \ast 1_{-A}\|_{\ell^2(G)}^2\\ & \leq &\|1_{B+A} \ast 1_{-A}\|_{\ell^\infty(G)}\|1_{B+A} \ast 1_{-A}\|_{\ell^1(G)}\\ & = & |B+A||-A|^2\leq K|A|^3
\end{eqnarray*}
by Parseval's theorem and then H{\"o}lder's inequality.  It follows that
\begin{equation*}
|\mathcal{L}| = O((\sqrt{K}|A|)^2|A|^{-2}\log (K|A|^3/(\sqrt{K}|A|)^2))=O(K\log |A|)
\end{equation*}
as required.
\end{proof}

\section{Structure in symmetry sets}\label{sec.css}

Recall from \cite{taovu::} that if $G$ is a discrete abelian group, $A \subset G$ is a finite non-empty set and $\eta \in (0,1]$ then the \emph{symmetry set of $A$ at threshold $\eta$} is
\begin{equation*}
\Sym_\eta(A):=\{x \in G: 1_A \ast 1_{-A}(x) \geq \eta |A|\}.
\end{equation*}
Symmetry sets are essentially dual to spectra so it should come as no surprise that they also have a structure theorem along the lines of Chang's theorem.
\begin{proposition}[Chang's theorem for symmetry sets]\label{prop.symchang}
Suppose that $G$ is a discrete abelian group, $A \subset G$ is a finite non-empty set and $\eta \in (0,1]$ is a parameter. Then there is a set $\mathcal{L}$ such that
\begin{equation*}
\Sym_\eta(A) \subset \Span(\mathcal{L}) \textrm{ and } |\mathcal{L}| = O(\eta^{-2}\log |A|).
\end{equation*}
\end{proposition}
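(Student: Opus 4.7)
The plan is to exploit Pontryagin duality to convert the symmetry set of $A \subset G$ into the large spectrum of a suitable function on the compact dual $\wh{G}$, and then invoke Chang's theorem (as stated in \S\ref{sec.ft}) directly.

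Concretely, let $f := |\wh{1_A}|^2 \in L^1(\wh{G})$, where the Fourier transform is now the one on the discrete group $G$ (so $\wh{1_A}$ lives on the compact group $\wh{G}$). Since $\wh{1_{-A}} = \overline{\wh{1_A}}$, we have $f = \wh{1_A}\cdot \wh{1_{-A}} = \wh{1_A \ast 1_{-A}}$. Identifying $\wh{\wh{G}}$ with $G$ via duality, for $x \in G$ the Fourier transform of $f$ satisfies $\wh{f}(x) = (1_A \ast 1_{-A})(-x) = (1_A \ast 1_{-A})(x)$, using the evenness of $1_A \ast 1_{-A}$ and Fourier inversion. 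Moreover, Parseval's theorem gives $\|f\|_{L^1(\wh{G})} = \|\wh{1_A}\|_{L^2(\wh{G})}^2 = |A|$. Putting these together,
\begin{equation*}
\Sym_\eta(A) = \{x \in G : |\wh{f}(x)| \geq \eta\|f\|_{L^1(\wh{G})}\} = \Spec_\eta(f),
\end{equation*}
where the last spectrum is taken on the compact group $\wh{G}$.

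Now I apply Chang's theorem to $f$ on $\wh{G}$ to obtain $\mathcal{L} \subset G$ with $\Spec_\eta(f) \subset \Span(\mathcal{L})$ and
\begin{equation*}
|\mathcal{L}| = O(\eta^{-2} \log(\|f\|_{L^2(\wh{G})}^2 \|f\|_{L^1(\wh{G})}^{-2})).
\end{equation*}
It remains only to bound the ratio inside the logarithm. By Parseval, $\|f\|_{L^2(\wh{G})}^2 = \|1_A \ast 1_{-A}\|_{\ell^2(G)}^2$, and then H\"older gives
\begin{equation*}
\|1_A \ast 1_{-A}\|_{\ell^2(G)}^2 \leq \|1_A \ast 1_{-A}\|_{\ell^\infty(G)}\|1_A \ast 1_{-A}\|_{\ell^1(G)} = |A|\cdot |A|^2 = |A|^3,
\end{equation*}
so the ratio is at most $|A|^3/|A|^2 = |A|$, yielding $|\mathcal{L}| = O(\eta^{-2}\log|A|)$ as required.

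There is no real obstacle here: the proof is essentially a ``dualisation'' of the proof of Proposition \ref{prop.shkasym}, and indeed the $\ell^2$/$\ell^\infty$/$\ell^1$ bookkeeping parallels the estimates on $\|g\|_{L^1(\wh{G})}$ and $\|g\|_{L^2(\wh{G})}$ in that proof. The only thing to take care of is bookkeeping between $G$ and $\wh{G}$ --- making sure that $f$ is placed on the right group so that Chang's theorem produces a spanning set living in $G$ rather than in $\wh{G}$ --- but Pontryagin duality handles this automatically.
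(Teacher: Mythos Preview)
Your proof is correct and follows essentially the same route as the paper: define $f=|\wh{1_A}|^2$, identify $\Sym_\eta(A)=\Spec_\eta(f)$ via $\|f\|_{L^1(\wh{G})}=|A|$, apply Chang's theorem, and bound the logarithm argument by $|A|$. The only cosmetic difference is in that last estimate: the paper stays on the $\wh{G}$-side, using H\"older ($\|f\|_{L^2}^2\|f\|_{L^1}^{-2}\leq \|f\|_{L^\infty}\|f\|_{L^1}^{-1}$) together with Hausdorff--Young ($\||\wh{1_A}|^2\|_{L^\infty}\leq |A|^2$), whereas you transfer to the $G$-side via Parseval and apply H\"older to $1_A\ast 1_{-A}$ there --- the two computations are dual to one another and yield the same bound.
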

\begin{proof}
Symmetry sets are dual to spectra in the sense that $\Sym_\eta(A) = \Spec_\eta(f)$ where $f:=|\wh{1_A}|^2$.  To see this note that
\begin{equation*}
\|f\|_{L^1(\wh{G})}=\||\wh{1_A}|^2\|_{L^1(\wh{G})} = \|\wh{1_A}\|_{L^2(\wh{G})}^2 = \|1_A\|_{\ell^2(G)}^2 = |A|
\end{equation*}
by Parseval's theorem.  In light of this we apply Chang's theorem to get that $\Sym_{\eta}(A)$ is contained in $ \Span(\mathcal{L})$ for some set $\mathcal{L}$ with
\begin{equation*}
 |\mathcal{L}|=O(\eta^{-2}\log \|f\|_{L^2(\P_G)}^2\|f\|_{L^1(\P_G)}^{-2}).
\end{equation*}
The argument of the logarithm may then be bounded above by H{\"o}lder's inequality and the Hausdorff-Young inequality:
\begin{equation*}
 \|f\|_{L^2(\P_G)}^2\|f\|_{L^1(\P_G)}^{-2}\leq \|f\|_{L^\infty(\wh{G})}\|f\|_{L^1(\wh{G})}^{-1} =\||\wh{1_A}|^2\|_{L^\infty(\wh{G})}/ |A|\leq |A|.
\end{equation*}
The result is proved.
\end{proof}

\section{The proof of Theorem \ref{thm.subexpshk}}\label{sec.main}

In light of Proposition \ref{prop.symchang} we should like to show that if $A$ has small doubling then it correlates with a symmetry set having large threshold.  To this end we recall the following result.
\begin{proposition}[{\cite[Proposition 1.3]{san::03}}]\label{prop.core}
Suppose that $G$ is a discrete abelian group, $A$ is a non-empty subset of $G$ with $|A+A| \leq K|A|$, and $\epsilon \in (0,1]$ is a parameter. Then there is a non-empty set $A'\subset A$ such that
\begin{equation*}
|\Sym_{1-\epsilon}(A'+A)| \geq \exp(-K^{O(1/\log (1/(1-\epsilon)))}\log K)|A|.
\end{equation*}
\end{proposition}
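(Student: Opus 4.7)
The plan is to produce a set $A' \subset A$ together with many translations that approximately fix the set $A'+A$. Since $x \in \Sym_{1-\epsilon}(A'+A)$ is equivalent to $|((A'+A)+x)\triangle(A'+A)| \leq 2\epsilon|A'+A|$, and $A'+A$ is the support of the convolution $1_{A'}\ast 1_A$, this is essentially a \emph{set-level} almost-periodicity statement for $1_{A'}\ast 1_A$.

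My approach would go via the Croot--Sisask almost-periodicity lemma. Given $|A+A|\leq K|A|$, a standard version of that lemma produces, for any $p \geq 1$ and $\eta > 0$, a set $T \subseteq A$ with
\[
|T| \geq K^{-O(p\eta^{-2})} |A|
\]
such that for every $t \in T - T$,
\[
\|1_A \ast 1_T(\cdot - t) - 1_A \ast 1_T\|_{\ell^p(G)} \leq \eta |T| |A|^{1/p}.
\]
I would then take $A'$ to be a sub-level set of $1_A \ast 1_T$, keeping those points where the convolution is close to its maximum value $|T|$, so that $A'+A$ is well-approximated by a hard super-level set of $1_A \ast 1_T$ above some threshold $\tau|T|$. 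A Chebyshev-type estimate would then convert the $\ell^p$ almost-invariance of $1_A \ast 1_T$ into an $\ell^1$ bound on the symmetric difference $(A'+A)\triangle((A'+A)+t)$, yielding $T - T \subseteq \Sym_{1-\epsilon}(A'+A)$ for appropriate parameter choices.

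The remaining task is to balance $p$, $\eta$, and $\tau$ to hit the stated quantitative bound. The threshold argument forces $(\eta/\tau)^p$ to be of order $\epsilon$, and the shape $K^{O(1/\log(1/(1-\epsilon)))}$ in the final density arises from taking $p$ roughly of that order so that quantities like $(1-\epsilon)^p$ remain bounded below by a constant; $\eta$ is then constrained to be a small power of $K$, and substitution into $K^{-O(p\eta^{-2})}$ produces the claimed $\exp(-K^{O(1/\log(1/(1-\epsilon)))}\log K)|A|$ lower bound (using Pl\"unnecke to pass from $|T|$ to $|T-T|$).

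The main obstacle I anticipate is precisely this step of converting $\ell^p$ closeness of convolutions into set-theoretic closeness of super-level supports: to make the conversion efficient one needs the convolution to be well-separated from the threshold on most of its support, which calls for essentially $\ell^\infty$-level almost-periodicity. Achieving this typically requires either passage to high-order sumsets $kT$ or an iterated refinement of Croot--Sisask in which the invariance error is roughly squared at each pass, and it is this amplification which is responsible for the super-polynomial factor $K^{O(1/\log(1/(1-\epsilon)))}$ in the exponent rather than a plain polynomial in $K$.
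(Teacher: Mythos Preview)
The paper does not prove this proposition: it is quoted from \cite[Proposition 1.3]{san::03} and used as a black box in \S\ref{sec.main}. There is therefore no argument in the present paper against which to compare your proposal.

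As a standalone sketch, your Croot--Sisask route is a plausible modern attack, but two points are looser than you indicate. First, the proposition demands $A'\subset A$, whereas a (super-)level set of $1_A\ast 1_T$ lives in $A+T$ rather than in $A$; you need an additional step---for instance passing to those $a\in A$ with $a+T$ almost contained in the level set---to produce a genuine subset of $A$, and this step interacts with the threshold choice. Second, your parameter accounting does not close as written: you assert that $\eta$ is ``constrained to be a small power of $K$'', but nothing in the preceding conditions $(\eta/\tau)^p\approx\epsilon$ and $p\approx 1/\log(1/(1-\epsilon))$ forces any $K$-dependence into $\eta$; without that, substitution into $K^{-O(p\eta^{-2})}$ gives $\exp(-O(p\eta^{-2})\log K)$ with $p\eta^{-2}$ a function of $\epsilon$ alone, not the claimed $K^{O(1/\log(1/(1-\epsilon)))}$. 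The super-polynomial $K$-dependence in the exponent genuinely comes from the amplification/iteration step you allude to at the end, and that step---not the initial Croot--Sisask application---is where the real work lies and would need to be made precise.
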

In fact the above is true for non-abelian groups as well (with the obvious changes of sums to products) but our other results are not.   We shall use it in the range when $\epsilon$ is close to $1$; the fact that it still has content in this region is an idea due to Tao.

We now have all the ingredients necessary for the proof of our main result.
\begin{proof}[Proof of Theorem \ref{thm.subexpshk}]
We begin by applying Proposition \ref{prop.core} with parameter $\epsilon = 1-K^{\eta/2}$ to get that there is a non-empty set $A'\subset A$ with
\begin{equation*}
 |\Sym_{K^{-\eta/2}}(A'+A)| \geq  K^{-\exp(O(\eta^{-1}))}|A|.
\end{equation*}
We apply Proposition \ref{prop.shkasym} to get a set $\mathcal{L}$ such that
\begin{equation*}
S:=\Sym_{K^{-\eta/2}}(A'+A) \subset \Span(\mathcal{L}) \textrm{ and } |\mathcal{L}| = O(K^{\eta}\log |A|).
\end{equation*}
On the other hand
\begin{equation*}
1_{A'+2A} \ast 1_{-A}(x) \geq |A|.1_{A'+A}(x) \textrm{ for all }x \in G,
\end{equation*}
whence
\begin{eqnarray*}
|A|^2.K^{-\eta/2}|A'+A||S| & \leq & |A|^2.\langle 1_{A'+A} \ast 1_{-(A'+A)},1_S \rangle\\ & \leq & \langle 1_{A'+2A} \ast 1_{-A} \ast 1_A \ast 1_{-(A'+2A)},1_S \rangle\\ & \leq & \|1_{A' +2A} \ast 1_{-(A'+2A)} \ast 1_A\|_{\ell^1(G)}\|1_A \ast 1_S\|_{\ell^\infty(G)}\\ & = & |A'+2A|^2|A|\|1_A \ast 1_S\|_{\ell^\infty(G)}.
\end{eqnarray*}
Since $A' \subset A$ and $|A+A| \leq K|A|$ we have, by Pl{\"u}nnecke's inequality, that $|A'+2A| \leq K^3|A|$ and so
\begin{equation*}
\|1_A \ast 1_S\|_{\ell^\infty(G)} \geq K^{3-\eta/2}|S| \geq K^{-\exp(O(\eta^{-1}))}|A|.
\end{equation*}
It follows that there is some $x$ such that $|A\cap (x+S)| \geq K^{-\exp(O(\eta^{-1}))}|A|$, but then $x+S \subset \Span(\mathcal{L}')$ where $\mathcal{L}':=\mathcal{L}\cup \{x\}$.  The result is proved.
\end{proof}

\section{Some tools of the trade in additive combinatorics}\label{sec.addcomb}

In this section we shall record some of the standard tools used in additive combinatorics for the purposes of proving Theorem \ref{thm.subexpshk} in the next section.

Chang's theorem from \S\ref{sec.ft} is proved using Rudin's inequality and in our context this may be seen as an estimate for the higher energy norms of the spectrum.  Shkredov in \cite{shk::4} encoded this idea formally and we shall now record a weak version of one of his results saying that the large spectrum has large additive energy; we include a proof since it is so short.
\begin{proposition}\label{prop.shk}
Suppose that $G$ is a compact abelian group, $A \subset G$ has density $\alpha>0$ and $S \subset \Spec_\delta(A)$.  Then
\begin{equation*}
E(S):=\|1_S \ast 1_{-S}\|_{\ell^2(\wh{G})}^2 \geq \delta^8\alpha |S|^4.
\end{equation*}
\end{proposition}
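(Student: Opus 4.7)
The plan is to exploit the Fourier-analytic definition of the spectrum together with two applications of Cauchy--Schwarz, the second of which brings the convolution $1_S \ast 1_{-S}$, and hence $E(S)$, directly into view.

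First, since $S \subset \Spec_\delta(A)$ and $\|1_A\|_{L^1(G)} = \alpha$, we have $|\wh{1_A}(\gamma)|^2 \geq \delta^2 \alpha^2$ for every $\gamma \in S$, so
\begin{equation*}
\delta^2 \alpha^2 |S| \leq \sum_{\gamma \in S} |\wh{1_A}(\gamma)|^2 = \int_G (1_A \ast 1_{-A})(x) \overline{g(x)} \, d\P_G(x),
\end{equation*}
where $g(x) := \sum_{\gamma \in S} \gamma(x)$ is the natural test function with $\wh{g} = 1_S$, the identity following from Parseval applied to $\wh{1_A \ast 1_{-A}} = |\wh{1_A}|^2$. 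The convenient feature of this particular $g$ is that $\wh{|g|^2} = \wh{g} \ast \wh{\bar g} = 1_S \ast 1_{-S}$, which places $E(S)$ within easy reach.

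Next, I would apply the weighted Cauchy--Schwarz inequality $|\int h \bar{g} \, d\P_G|^2 \leq \|h\|_{L^1(G)} \cdot \int h |g|^2 \, d\P_G$ to the non-negative function $h := 1_A \ast 1_{-A}$. Since $\|1_A \ast 1_{-A}\|_{L^1(G)} = \alpha^2$, squaring the preceding display yields
\begin{equation*}
\delta^4 \alpha^4 |S|^2 \leq \alpha^2 \int_G (1_A \ast 1_{-A})(x) |g(x)|^2 \, d\P_G(x) = \alpha^2 \sum_{\eta \in \wh{G}} |\wh{1_A}(\eta)|^2 (1_S \ast 1_{-S})(\eta),
\end{equation*}
the equality again by Parseval together with the formula for $\wh{|g|^2}$.

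A second Cauchy--Schwarz bounds the sum over $\eta$ by $(\sum_\eta |\wh{1_A}(\eta)|^4)^{1/2} E(S)^{1/2}$, and Parseval combined with H\"older gives
\begin{equation*}
\sum_{\eta \in \wh{G}} |\wh{1_A}(\eta)|^4 = \|1_A \ast 1_{-A}\|_{L^2(G)}^2 \leq \|1_A \ast 1_{-A}\|_{L^\infty(G)} \|1_A \ast 1_{-A}\|_{L^1(G)} \leq \alpha \cdot \alpha^2 = \alpha^3.
\end{equation*}
Chaining the three inequalities produces $E(S)^{1/2} \geq \delta^4 \alpha^{1/2} |S|^2$, that is $E(S) \geq \delta^8 \alpha |S|^4$. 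No step is the real obstacle; the only care needed is to keep the Haar measure conventions (probability on $G$, counting on $\wh{G}$) straight, and to deploy the weighted Cauchy--Schwarz in the middle step so that the factor $\|1_A \ast 1_{-A}\|_{L^1} = \alpha^2$ is available to double the exponent on $\delta$ and produce the advertised $\delta^8$.
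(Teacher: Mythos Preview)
Your argument is correct and reaches exactly the stated bound.  The paper's proof starts from the same inner product $\sum_{\gamma\in S}|\wh{1_A}(\gamma)|^2\geq\delta^2\alpha^2|S|$ and also pivots on the identity $E(S)=\|\wh{1_S}\|_{L^4(G)}^4$, but the chain of inequalities in between is organised differently: the paper writes the inner product as $\langle 1_A\ast\wh{1_S},1_A\rangle_{L^2(G)}$, applies H\"older with exponents $(4,4/3)$, and then the trivial case of Young's inequality $\|1_A\ast\wh{1_S}\|_{L^4}\leq\|1_A\|_{L^1}\|\wh{1_S}\|_{L^4}$ to reach $\delta^2\alpha^2|S|\leq\alpha^{7/4}E(S)^{1/4}$ in one stroke.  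Your route via the weighted Cauchy--Schwarz on $h=1_A\ast 1_{-A}$ followed by a second Cauchy--Schwarz on the Fourier side is slightly longer but has the pleasant feature of making the fourth moment $\sum_\eta|\wh{1_A}(\eta)|^4\leq\alpha^3$ appear explicitly; this quantity is hidden inside the Young step in the paper's version.  Neither approach dominates the other, and both are standard.
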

\begin{proof}
We begin by applying Plancherel's theorem and H{\"o}lder's inequality to the inner product
\begin{equation*}
|\langle\wh{1_A}1_S,\wh{1_A}\rangle_{\ell^2(\wh{G})}| = | \langle 1_A \ast \wh{1_S},1_A\rangle_{L^2(G)}| \leq \|1_A \ast \wh{1_S}\|_{L^4(G)}\|1_A\|_{L^{4/3}(G)}.
\end{equation*}
By a trivial instance of Young's inequality and Parseval's theorem we have
\begin{equation*}
\|1_A \ast \wh{1_S}\|_{L^4(G)} \leq \|1_A\|_{L^1(G)}\|\wh{1_S}\|_{L^4(G)}=\alpha E(S)^{1/4},
\end{equation*}
and even more trivially we have $\|1_A\|_{L^{4/3}(G)}  \leq \alpha^{3/4}$.  On the other hand
\begin{equation*}
\langle\wh{1_A}1_S,\wh{1_A}\rangle_{\ell^2(\wh{G})}\geq \delta^2\alpha^2|S|,
\end{equation*}
from which the result follows on rearranging.
\end{proof}
In \cite{shk::4} Shkredov extends the above in two ways: first, by considering different powers in H{\"o}lder's inequality he gets a lower bound on the $2k$-th energy (that is $\|\wh{1_S}\|_{L^{2k}(G)}^{2k}$); secondly, by dyadically decomposing the range of $|\wh{1_A}|$, he improves the $\delta^8$ to $\Omega(\delta^4)$.  

It is easy to see from Parseval's inequality that $S$ has size at most $\delta^{-2}\alpha^{-1}$; the reader should think of the situation when the size is close to this, $\delta$ is fixed but possibly small and $\alpha\rightarrow 0$.  Then $|S|$ tends to infinity in size and $E(S) \geq \delta^{O(1)}|S|^3$ -- it has large additive energy.

In the situation described above we have the celebrated Balog-Szemer{\'e}di-Gowers theorem (see \cite{balsze::} and \cite{gow::4}) which we now recall.
\begin{theorem}
Suppose that $G$ is an abelian group and $A \subset G$ has $E(A)\geq c|A|^3$.  Then there is a subset $A' \subset A$ such that
\begin{equation*}
|A'| = \Omega(c^{O(1)}|A|) \textrm{ and } |A'+A'| = O(c^{-O(1)}|A'|).
\end{equation*}
\end{theorem}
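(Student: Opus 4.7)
The plan is to prove this via the standard graph-theoretic route, in four stages. First, reformulate the energy hypothesis as graph density. Writing $E(A) = \sum_s r(s)^2$ with $r(s) := 1_A \ast 1_{-A}(s)$ and $\sum_s r(s) = |A|^2$, a simple pigeonhole shows that the set of popular differences $P := \{s : r(s) \geq c|A|/2\}$ satisfies $\sum_{s \in P} r(s)^2 \geq c|A|^3/2$, and hence $\sum_{s \in P} r(s) \geq c|A|^2/2$ since $r(s) \leq |A|$. Form the Cayley-like graph $\Gamma$ on vertex set $A$ with $a \sim b$ iff $a - b \in P$; by the above it has at least $c|A|^2/4$ edges.

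Next, apply the Balog--Szemer\'edi--Gowers graph lemma: any graph on $n$ vertices with $\delta n^2$ edges contains a subset $A' \subset V$ with $|A'| = \Omega(\delta^{O(1)} n)$ such that every pair $a,b \in A'$ is joined by at least $\Omega(\delta^{O(1)} n^2)$ paths of length $3$ in the graph. The standard proof samples a random vertex $v$, passes to an appropriate subset of its iterated neighbourhood, and uses Markov and Cauchy--Schwarz estimates to show that most pairs in that subset have many common two-step neighbours; a final cleanup passes from ``most pairs'' to ``all pairs'' at the cost of an $O(1)$-factor density loss.

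Now convert paths into representations. A length-$3$ path $a \sim v_1 \sim v_2 \sim b$ in $\Gamma$ writes $a - b = (a - v_1) + (v_1 - v_2) + (v_2 - b)$ as a sum of three elements of $P$; each such element admits at least $c|A|/2$ representations as a difference $u - w$ of elements of $A$, and, crucially, for fixed $a,b$ the six-tuple $(u_i,w_i)_{i=1,2,3}$ determines $v_1,v_2$ uniquely via $v_1 = a - u_1 + w_1$, $v_2 = v_1 - u_2 + w_2$. Hence each path contributes at least $(c|A|/2)^3$ distinct six-tuples witnessing $a - b = u_1 - w_1 + u_2 - w_2 + u_3 - w_3$, so the total count of such representations is at least $c^{O(1)} |A|^5$ for every $(a,b) \in A' \times A'$. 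Picking one pair $(a,b)$ per difference $d \in A' - A'$ and comparing with the trivial upper bound $|A|^6$ on the total number of six-tuples forces $|A' - A'| \leq c^{-O(1)} |A| \leq c^{-O(1)} |A'|$. Pl\"unnecke's inequality (or the Ruzsa triangle inequality) then upgrades this to $|A' + A'| \leq c^{-O(1)} |A'|$, as required.

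The main obstacle is the graph lemma, which is the combinatorial heart of the argument and requires care to give polynomial dependence in $c$; the ``most versus all pairs'' cleanup adds further bookkeeping. Once that lemma is in hand, however, the remaining steps are purely book-keeping inside the additive structure, which is presumably why the paper is content to cite the theorem as a black box.
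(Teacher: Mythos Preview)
Your proposal is correct: this is the standard Gowers-style proof of the Balog--Szemer\'edi--Gowers theorem, and the steps (popular differences, the path-counting graph lemma, conversion of length-$3$ paths to $6A$-representations, and the final Pl\"unnecke/Ruzsa step) are all sound as stated. In particular your injectivity observation --- that for fixed $a,b$ the six-tuple $(u_i,w_i)$ recovers the intermediate vertices $v_1,v_2$ --- is exactly what makes the counting go through.

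There is, however, nothing to compare against: the paper does not prove this theorem at all. It merely \emph{recalls} it with references to Balog--Szemer\'edi and Gowers and then uses it as a black box inside the proof of Lemma~\ref{lem.itbitty}. You already anticipated this in your final sentence. So your write-up is not an alternative to the paper's argument but rather a self-contained sketch of the cited result; as such it matches the approach of the Gowers reference essentially verbatim.
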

Gowers \cite{gow::4} made the important observation that this could then naturally be combined with a Fre{\u\i}man-type theorem in many applications and our present work is another such example.

Finally we need to record how we pass from large Fourier coefficients to increased density on a subspace when $G:=\F^n$.  The key to the simplicity of this in the finite field model is the following easy calculation.  Suppose that $W \leq \wh{G}$.  Then
\begin{equation*}
\wh{\P_{W^\perp}}(\gamma) = \begin{cases} 1 & \textrm{ if } \gamma \in W,\\ 0 & \textrm{ otherwise.}\end{cases}.
\end{equation*}
We are now in a position to record the `Roth-Meshulam' increment lemma.
\begin{lemma}[$\ell^\infty(\wh{G})$-increment lemma]\label{lem.structinc}
Suppose that $\F$ is a finite field, $G:=\F^n$, $A \subset G$ has density $\alpha$ and $\sup_{\gamma \neq 0_{\wh{G}}}{|\wh{1_A}(\gamma)|} \geq \epsilon \alpha$. Then there is a subspace $V \leq G$ wth $\cod V=1$ and
\begin{equation*}
\|1_ A \ast \P_V\|_{L^\infty(G)} \geq \alpha(1+\epsilon/2).
\end{equation*}
\end{lemma}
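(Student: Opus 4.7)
The plan is to convert the single large Fourier coefficient into a balanced sum of coset-density deviations and then extract the increment by pigeonhole. Let $p$ denote the characteristic of $\F$, so that $G = \F^n$ has exponent $p$ and every non-trivial character of $G$ has order exactly $p$. I first pick $\gamma_0 \in \wh{G}\setminus\{0_{\wh{G}}\}$ with $|\wh{1_A}(\gamma_0)| \geq \epsilon\alpha$ and set $V := \ker\gamma_0 \leq G$, the annihilator of the order-$p$ cyclic subgroup $W := \langle \gamma_0\rangle$ of $\wh{G}$; then $V$ has index $p$ in $G$, so $\cod V = 1$, and the identity from the excerpt specialises to $\wh{\P_V} = 1_W$.

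Next, with $\omega := e^{2\pi i /p}$, define $C_j := \gamma_0^{-1}(\omega^j)$ for $j = 0, \ldots, p-1$, so the $C_j$ are exactly the cosets of $V$ and $\gamma_0 \equiv \omega^j$ on $C_j$. Writing $\alpha_j := |A \cap C_j|/|V|$ for the density of $A$ on $C_j$, one checks directly that $(1_A \ast \P_V)(x) = \alpha_j$ whenever $x \in C_j$, so $\|1_A \ast \P_V\|_{L^\infty(G)} = \max_j \alpha_j$, while $p^{-1}\sum_j \alpha_j = \alpha$. A short expansion of the Fourier coefficient gives
\begin{equation*}
\wh{1_A}(\gamma_0) = \frac{1}{p}\sum_{j=0}^{p-1}\omega^{-j}\alpha_j = \frac{1}{p}\sum_{j=0}^{p-1}\omega^{-j}(\alpha_j - \alpha),
\end{equation*}
where the second equality uses $\sum_j \omega^{-j} = 0$.

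The triangle inequality then yields $\sum_j |\alpha_j - \alpha| \geq p\epsilon\alpha$, and since $\sum_j(\alpha_j - \alpha) = 0$ forces the positive and negative parts of $(\alpha_j - \alpha)$ to have equal sum, I deduce $\sum_j(\alpha_j - \alpha)_+ \geq p\epsilon\alpha/2$. Pigeonholing over the $p$ cosets finally supplies some $j$ with $\alpha_j \geq \alpha(1 + \epsilon/2)$, which is precisely the required lower bound on $\|1_A \ast \P_V\|_{L^\infty(G)}$. There is no substantive obstacle here; the only point worth a moment's care is that $V$ has index exactly $p$, which is codimension one in the $\F_p$-vector-space sense and coincides with $\F$-codimension one when $\F$ is a prime field (the principal setting for the Roth--Meshulam-type application in \S\ref{sec.appln}).
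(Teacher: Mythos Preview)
Your proof is correct and follows essentially the same route as the paper: both take $V=\{\gamma_0\}^\perp=\ker\gamma_0$, establish that $\|(1_A-\alpha)\ast\P_V\|_{L^1(G)}\geq\epsilon\alpha$ (the paper via the Hausdorff--Young inequality, you via the explicit coset expansion and the triangle inequality---these are the same computation), and then use the mean-zero property to halve and pigeonhole. Your caveat about $\F$-codimension versus $\F_p$-codimension applies equally to the paper's choice $V=\{\gamma\}^\perp$, so there is no discrepancy.
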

\begin{proof}
We do the obvious thing and define $V=\{\gamma\}^\perp$ so that
\begin{equation*}
((1_A - \alpha) \ast \P_V)^\wedge(\gamma) = \wh{1_A}(\gamma),
\end{equation*}
whence by the Hausdorff-Young inequality we have
\begin{equation*}
\|(1_A - \alpha) \ast \P_V\|_{L^1(G)} \geq \epsilon \alpha.
\end{equation*}
On the other hand
\begin{equation*}
\int{((1_A - \alpha) \ast \P_V)d\P_G} = 0,
\end{equation*}
whence
\begin{equation*}
2\sup_{x \in G}{(1_A - \alpha) \ast \P_V(x)} \geq \epsilon \alpha.
\end{equation*}
The result follows on dividing by $2$ and adding $\alpha$ to both sides.
\end{proof}
It is also possible to get a very large correlation with a subspace if one has a large $\ell^2(\wh{G})$ mass of $\wh{1_A}$.  This is an idea introduced by Szemer{\'e}di in \cite{sze::2} and encoded in the model setting by the following lemma.
\begin{lemma}[$\ell^2(\wh{G})$-increment lemma]\label{lem.structincl2}
Suppose that $\F$ is a finite field, $G:=\F^n$, $A \subset G$ has density $\alpha>0$ and $W \leq \wh{G}$ is such that $\sum_{\gamma \in W}{|\wh{1_A}(\gamma)|^2} \geq \epsilon \alpha$.  Then there is a subspace $V \leq G$ wth $\cod V=\dim W$ and
\begin{equation*}
\|1_ A \ast \P_V\|_{L^\infty(G)} \geq \epsilon.
\end{equation*}
\end{lemma}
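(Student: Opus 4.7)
The plan is to take $V := W^\perp$, so that $\cod V = \dim W$ and, by the Fourier identity recorded just before the lemma, $\wh{\P_V} = 1_W$. The task is then to extract an $L^\infty$ lower bound for $1_A \ast \P_V$ from the hypothesised $\ell^2$ mass of $\wh{1_A}$ on $W$.

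First I would compute $\|1_A \ast \P_V\|_{L^2(G)}^2$ using Parseval's theorem and the convolution identity $(1_A \ast \P_V)^\wedge = \wh{1_A}\wh{\P_V}$:
\begin{equation*}
\|1_A \ast \P_V\|_{L^2(G)}^2 = \sum_{\gamma \in \wh{G}}|\wh{1_A}(\gamma)|^2 |\wh{\P_V}(\gamma)|^2 = \sum_{\gamma \in W}|\wh{1_A}(\gamma)|^2 \geq \epsilon\alpha,
\end{equation*}
the last inequality being the hypothesis. On the other hand $1_A \ast \P_V$ is a non-negative function with
\begin{equation*}
\|1_A \ast \P_V\|_{L^1(G)} = \|1_A\|_{L^1(G)}\|\P_V\|_{L^1(G)} = \alpha,
\end{equation*}
since $\P_V$ is a probability measure (so integrates to $1$).

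Finally I would interpolate using the trivial inequality $\|f\|_{L^2(G)}^2 \leq \|f\|_{L^\infty(G)}\|f\|_{L^1(G)}$ applied to $f := 1_A \ast \P_V$, which rearranges to
\begin{equation*}
\|1_A \ast \P_V\|_{L^\infty(G)} \geq \frac{\|1_A \ast \P_V\|_{L^2(G)}^2}{\|1_A \ast \P_V\|_{L^1(G)}} \geq \frac{\epsilon\alpha}{\alpha} = \epsilon,
\end{equation*}
as required.

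There is essentially no obstacle here: the lemma is a clean one-line consequence of Parseval once one has the Fourier identity for $\P_{W^\perp}$. The only thing to notice is that the $L^\infty$-extraction step must go through $L^1$ (not just through $L^2$ bounds directly), since it is the normalisation $\|\P_V\|_{L^1} = 1$ that keeps $\|1_A \ast \P_V\|_{L^1}$ at the small value $\alpha$ and so lets us convert an $L^2$ lower bound of order $\epsilon\alpha$ into an $L^\infty$ lower bound of the much larger size $\epsilon$.
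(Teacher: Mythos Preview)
Your proof is correct and follows essentially the same route as the paper: set $V=W^\perp$, use Parseval to convert the $\ell^2$ mass on $W$ into a lower bound for $\|1_A\ast\P_V\|_{L^2(G)}^2$, and then apply the H\"older-type bound $\|f\|_{L^2}^2\leq\|f\|_{L^\infty}\|f\|_{L^1}$ together with $\|1_A\ast\P_V\|_{L^1(G)}=\alpha$. The paper's proof is identical in content, merely phrasing the interpolation step as ``H\"older's inequality'' and the $L^1$ computation as $\int 1_A\ast\P_V\,d\P_G=\alpha$.
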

\begin{proof}
We do the obvious thing and define $V=W^\perp$ and so
\begin{equation*}
(1_A \ast \P_V)^\wedge(\gamma) = \wh{1_A}(\gamma) \textrm{ whenever }\gamma \in W.
\end{equation*}
Thus by Parseval's theorem and the hypothesis we have that
\begin{equation*}
\int{(1_A \ast \P_V)^2d\P_G} =\sum_{\gamma \in W}{|\wh{1_A}(\gamma)|^2} \geq \epsilon\alpha.
\end{equation*}
The result then follows by H{\"o}lder's inequality and the fact that
\begin{equation*}
\int{1_A \ast \P_Vd\P_G} = \alpha,
\end{equation*}
on dividing by $\alpha$.
\end{proof}

\section{Proof of Theorem \ref{thm.cvs}}\label{sec.appln}

The argument follows the usual iterative method pioneered by Roth \cite{rot::0} and exposed as particularly elegant in $\F_3^n$ by Meshulam in \cite{mes::}. The key quantity of interest is the number of solutions to the given equation.

Suppose that $\F$ is a finite field, $G:=\F^n$, $c  \in (\F^*)^r$ and $A \subset G$.  Then we write
\begin{equation*}
\Lambda_c(A):=\int{1_{-c_1.A}(c_2.x_2+\dots+ c_r.x_r)\prod_{i=2}^r{1_A(x_i)}d\P_G(x_2)\dots d\P_G(x_r)}.
\end{equation*}
Using the inversion formula, we may put
\begin{equation*}
1_A(x_i)=\sum_{\gamma_i \in \wh{G}}{\wh{1_A}(\gamma_i)\gamma_i(x_i)} \textrm{ for all }x_i \in G.
\end{equation*}
We insert this expression for $1_A$ into each instance in $\Lambda_c(A)$, and via the orthogonality relations get that $c_i.\gamma_i=c_j.\gamma_j =:\gamma$ for all $i,j$.  This gives a Fourier expression for $\Lambda_c(A)$ as follows:
\begin{equation}\label{eqn.fourier}
\Lambda_c(A)=\sum_{\gamma \in \wh{G}}{\prod_{i=1}^r{\wh{1_A}(c_i^{-1}.\gamma)}}.
\end{equation}
Of course, we shall use the above Fourier expression in the following driving lemma for our argument.
\begin{lemma}[Iteration lemma]\label{lem.itbitty}
There is a non-negative valued function $\nu$ with $\nu(r)=\Omega(r^{-1}\log r)$ for $r$ greater than some absolute constant such that if  $\F$ is a finite field, $G:=\F^n$, $c_1,\dots,c_r \in \F^*$ and $A \subset G$ has density $\alpha>0$, then at least one of the following is true:
\begin{enumerate}
\item (Many solutions) we have the lower bound $\Lambda_c(A) \geq \alpha^r/2$;
\item (Small correlation with low co-dimension subspace) there is a subspace $V \leq G$ with $\cod V=1$ such that 
\begin{equation*}
\|1_A \ast \P_V\|_{L^\infty(G)}\geq \alpha(1+\Omega(\alpha^{(1-\nu(r))/(r-2)}));
\end{equation*}
\item (Large correlation with a large co-dimension subspace) there is a subspace $V \leq G$ with $\cod V = O_r(\alpha^{-1/2(r-2)})$ such that
\begin{equation*}
\|1_A \ast \P_V\|_{L^\infty(G)}\geq \Omega(\alpha^{1/2}).
\end{equation*}
\end{enumerate}
\end{lemma}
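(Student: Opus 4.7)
The plan is to run a density-increment dichotomy on the magnitude of the largest non-trivial Fourier coefficient. Starting from the Fourier expansion \eqref{eqn.fourier}, the $\gamma = 0_{\wh{G}}$ term contributes exactly $\alpha^r$, so if $\Lambda_c(A) \geq \alpha^r/2$ we are done (case (1)). Otherwise the remaining terms have absolute sum at least $\alpha^r/2$. Setting $M := \sup_{\gamma \neq 0_{\wh{G}}}|\wh{1_A}(\gamma)|$ and bounding $r-2$ of the factors by $M$, Cauchy-Schwarz and Parseval yield
\[
\sum_{\gamma \neq 0_{\wh{G}}}\prod_{i=1}^r|\wh{1_A}(c_i^{-1}\gamma)| \leq M^{r-2}\sum_\gamma|\wh{1_A}(c_{r-1}^{-1}\gamma)||\wh{1_A}(c_r^{-1}\gamma)| \leq M^{r-2}\alpha,
\]
forcing $M \geq c_r\alpha^{1+1/(r-2)}$ -- this is the Roth-Meshulam regime giving the plain $n^{r-2}$ bound.

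From here I split on whether $M$ witnesses the improved threshold. If $M \geq c_r'\alpha^{1+(1-\nu(r))/(r-2)}$, Lemma \ref{lem.structinc} delivers conclusion (2) directly. In the complementary hard case, fix $\delta := c_r''\alpha^{(1+(r-3)\nu(r))/(r-2)}$ for a suitable small constant $c_r''$ and set $T := \Spec_\delta(A)\setminus\{0_{\wh{G}}\}$. A union bound separates the Fourier sum into a spectrum-heavy part, where $c_i^{-1}\gamma \in T$ for each $i = 1,\ldots, r-2$, and its complement; by the choice of $\delta$ together with the hard-case hypothesis on $M$ the complement is bounded by $(r-2)\delta M^{r-3}\alpha^2 < \alpha^r/4$, so the spectrum-heavy part carries at least $\alpha^r/4$. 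Pairing this lower bound with the trivial upper bound $|T|M^r$ on the same part forces $|T| \geq c_r'''\alpha^{-r(1-\nu(r))/(r-2)}$.

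Next I pass from ``$T$ is large'' to ``$T$ is structured''. Proposition \ref{prop.shk} gives $E(T) \geq \delta^8\alpha|T|^4$, and combined with the lower bound on $|T|$ this becomes $E(T)/|T|^3 \geq \alpha^{O_r(1)}$. The Balog-Szemer\'edi-Gowers theorem then extracts $T' \subset T$ with $|T'| \geq \alpha^{O_r(1)}|T|$ and doubling $K := |T'+T'|/|T'| \leq \alpha^{-O_r(1)}$. Applying Theorem \ref{thm.subexpshk} to $T'$ with a constant $\eta'$ chosen small enough that $|\mathcal{L}| = O(K^{\eta'}\log|T'|) = O_r(\alpha^{-1/(2(r-2))})$ produces $\mathcal{L}$ with $|T'\cap \Span(\mathcal{L})| \geq K^{-O_r(1)}|T'|$. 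Letting $W \leq \wh{G}$ be the $\F$-linear span of $\mathcal{L}$ (so $\dim W \leq |\mathcal{L}|$ and $\Span(\mathcal{L}) \subset W$), one obtains
\[
\sum_{\gamma \in W}|\wh{1_A}(\gamma)|^2 \geq |T' \cap W|(\delta\alpha)^2 \geq \alpha^{3/2},
\]
provided $\nu(r)$ is sufficiently small, at which point Lemma \ref{lem.structincl2} applied to this $W$ yields conclusion (3) with $V = W^\perp$.

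The main obstacle is balancing the three parameters $\nu(r)$, $\delta$, and $\eta'$ so that every inequality holds with slack. The doubly-exponential constant $K^{-\exp(O(1/\eta'))}$ in Theorem \ref{thm.subexpshk} prevents $\eta'$ from being shrunk too aggressively, and this -- together with the required co-dimension bound $|\mathcal{L}| = O_r(\alpha^{-1/(2(r-2))})$ and the target $\ell^2$-mass $\alpha^{3/2}$ -- is what pins $\nu(r)$ at $\Omega(r^{-1}\log r)$. Feeding this $\nu(r)$ through the outer density-increment iteration in Section \ref{sec.appln} converts the exponent $r-2$ into $r-2 + E(r)$ with $E(r) = \Omega((r-2)\nu(r)) = \Omega(\log r)$, as claimed.
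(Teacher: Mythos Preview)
Your proposal is correct and follows essentially the same route as the paper: Fourier-expand $\Lambda_c(A)$, threshold on the largest non-trivial coefficient to get case~(2), and in the hard case show the $\delta$-spectrum is large, apply Proposition~\ref{prop.shk} and Balog--Szemer\'edi--Gowers, then Theorem~\ref{thm.subexpshk} to trap a dense piece in a low-dimensional span and invoke Lemma~\ref{lem.structincl2} for case~(3). The only cosmetic difference is that the paper first passes through $r$-fold H\"older to obtain $\sum_{\gamma\neq 0}|\wh{1_A}(\gamma)|^r \geq \alpha^r/2$ and works with the fixed threshold $\epsilon=\alpha^{1/(r-2)}/4$, whereas you bound $r-2$ factors by the sup and two by Cauchy--Schwarz, choosing a $\nu(r)$-perturbed threshold $\delta$; the resulting parameters (size of the spectrum, energy ratio, choice of $\eta'\asymp 1/\log r$, and hence $\nu(r)=\Omega(r^{-1}\log r)$) match.
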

\begin{proof}
If we are in the first case of the lemma we are done; assume not so that from (\ref{eqn.fourier}) we get
\begin{equation*}
|\sum_{\gamma \in \wh{G}}{\prod_{i=1}^r{\wh{1_A}(c_i^{-1}.\gamma)}}| \leq \alpha^r/2.
\end{equation*}
As usual we extract the trivial mode: we have $\wh{1_A}(\gamma)=\alpha$ whence
\begin{equation*}
|\alpha^r+\sum_{\gamma \neq 0_{\wh{G}}}{\prod_{i=1}^r{\wh{1_A}(c_i^{-1}.\gamma)}}| \leq \alpha^r/2.
\end{equation*}
Thus, by the triangle inequality we get
\begin{equation*}
\sum_{\gamma \neq 0_{\wh{G}}}{\prod_{i=1}^r{|\wh{1_A}(c_i^{-1}.\gamma)|}} \geq \alpha^r/2.
\end{equation*}
We apply the $r$-function version of H{\"o}lder's inequality to this to get that
\begin{equation*}
\prod_{i=1}^r{\left(\sum_{\gamma \neq 0_{\wh{G}}}{|\wh{1_A}(c_i^{-1}.\gamma)|^r}\right)^{1/r}} \geq \alpha^2/2.
\end{equation*}
Now, each $c_i \in \F^*$ whence $c_i^{-1}.(\wh{G} \setminus \{0_{\wh{G}}\}) = (\wh{G} \setminus \{0_{\wh{G}}\})$ and
\begin{equation*}
\sum_{\gamma \neq 0_{\wh{G}}}{|\wh{1_A}(c_i^{-1}.\gamma)|^r}=\sum_{\gamma \neq 0_{\wh{G}}}{|\wh{1_A}(\gamma)|^r} \textrm{ for all } 1 \leq i \leq r.
\end{equation*}
Inserting this back into our inequality we see that each factor is the same and we get that
\begin{equation}\label{eqn.mass}
\sum_{\gamma \neq 0_{\wh{G}}}{|\wh{1_A}(\gamma)|^r} \geq \alpha^r/2.
\end{equation}
This inequality will let us analyse the large spectrum of $1_A$: write
\begin{equation*}
\epsilon:=\alpha^{1/(r-2)}/4 \textrm{ and } S:=\Spec_{\epsilon}(1_A)\setminus \{0_{\wh{G}}\}.
\end{equation*}
It follows from the definition of the spectrum and Parseval's theorem that 
\begin{eqnarray*}
\sum_{\gamma \not \in \Spec_{\epsilon}(1_A)}{|\wh{1_A}(\gamma)|^r} & \leq & (\epsilon \alpha)^{r-2}\sum_{\gamma \in \wh{G}}{|\wh{1_A}(\gamma)|^2}\\ & = & \alpha.4^{-(r-2)}.\alpha^{r-2}.\alpha \leq \alpha^r/4
\end{eqnarray*}
since $r \geq 3$.  Thus, by the triangle inequality and (\ref{eqn.mass}) we have
\begin{equation}\label{eqn.ass}
\sum_{\gamma \in S}{|\wh{1_A}(\gamma)|^r}\geq \alpha^r/2 - \sum_{\gamma \not \in \Spec_{\epsilon}(1_A)}{|\wh{1_A}(\gamma)|^r} \geq  \alpha^r/4.
\end{equation}
Now, suppose that $M \geq 1$ is a real to be optimised later.  If
\begin{equation*}
\sup_{\gamma \neq 0_{\wh{G}}}{|\wh{1_A}(\gamma)|} \geq \alpha^{-M/(r-2)r}\epsilon \alpha
\end{equation*}
then we shall be in the second case of the lemma by Lemma \ref{lem.structinc} once we optimise for $M$.  To proceed we therefore assume not so that
\begin{equation*}
\sup_{\gamma \neq 0_{\wh{G}}}{|\wh{1_A}(\gamma)|} \leq  \alpha^{-M/(r-2)r} \epsilon \alpha.
\end{equation*}
Inserting this into (\ref{eqn.ass}) we see that
\begin{equation*}
|S|.( \alpha^{-M/(r-2)r} \epsilon \alpha)^r \geq \alpha^r/4,
\end{equation*}
which can be rearranged to give
\begin{equation*}
|S| \geq \alpha^r.4^{-1}.\alpha^{-r}.4^r.\alpha^{-r/(r-2)}.\alpha^{M/(r-2)} =4^{r-1}\alpha^{(M-2)/(r-2)}.\alpha^{-1}.
\end{equation*}
Now, by Proposition \ref{prop.shk} $S$ has large additive energy.  Specifically
\begin{eqnarray*}
E(S) \geq \epsilon^8\alpha |S|^4& \geq& \alpha^{8/(r-2)}4^{-8}4^{r-1}\alpha^{(M-2)/(r-2)}|S|^3\\ &= & \alpha^{(M+6)/(r-2)}4^{r-9}|S|^3 = \Omega(\alpha^{O(M/r)}).
\end{eqnarray*}
It follows by the Balog-Szemer{\'e}di-Gowers theorem that there is some set $S' \subset S$ such that
\begin{equation*}
|S'| \geq \Omega(\alpha^{O(M/r)})|S| \textrm{ and } |S'+S'| \leq O(\alpha^{-O(M/r)})|S'|.
\end{equation*}
Now apply Theorem \ref{thm.subexpshk} with some parameter $\eta$ to get a set $\mathcal{L}$ such that
\begin{equation*}
|S' \cap \Span(\mathcal{L})| \geq \Omega(\alpha)^{O(\exp(O(\eta^{-1}))M/r)}|S'| \textrm{ and } |\mathcal{L}| = O(\alpha^{-O(\eta M/r)}\log |S'|).
\end{equation*}
This means that we may pick $\eta=\Omega(1/M)$ such that
\begin{equation*}
|S' \cap \Span(\mathcal{L})| \geq \alpha^{O(\exp(O(M))/r}|S'| \textrm{ and } |\mathcal{L}| = O(\alpha^{-1/4(r-2)}\log |S'|).
\end{equation*}
Write $W$ for the subspace generated by $\mathcal{L}$ and note that by the lower bound on $|S'|$ we thus have
\begin{equation*}
\sum_{\gamma \in W\setminus \{0_{\wh{G}}\}}{|\wh{1_A}(\gamma)|^2}\geq (\epsilon \alpha)^2|S' \cap  \Span(\mathcal{L})| = \Omega(  \alpha^{1+O(\exp(O(M)))/r}).
\end{equation*}
It follows that if $r \geq C$ for some absolute constant $C>0$ then we may pick $M=\Omega(\log r)$ in a way indendent of $A$ and $c$ such that
\begin{equation*}
\sum_{\gamma \in W\setminus \{0_{\wh{G}}\}}{|\wh{1_A}(\gamma)|^2}\geq \Omega(\alpha^{1+1/2}).
\end{equation*}
This is how the function $\nu$ is determined if $r \geq C$: $\nu(r)=M/r$.  On the other hand by Parseval's theorem we have that
\begin{equation*}
|S'| \leq |S| \leq (\epsilon \alpha)^{-2}.\alpha \leq O(\alpha^{-O(1)}),
\end{equation*}
whence
\begin{equation*}
\dim W= O(\alpha^{1/4(r-2)}\log |S'|)=O(\alpha^{1/4(r-2)}\log \alpha^{-1}).
\end{equation*}
We now apply Lemma \ref{lem.structincl2} to get the third conclusion of the lemma.  If $r \leq C$ then $\nu(r)=0$ and we simply note that $S$ is, in any case, non-empty and apply Lemma \ref{lem.structinc} to any character in this set to get the conclusion.
\end{proof}

With the above lemma we are ready to apply the usual iterative method.
\begin{proof}[Proof of Theorem \ref{thm.mainapp}]
We proceed by creating a sequence of subspaces $G=:V_0 \geq V_1 \geq \dots \geq V_k$ and sets $A_i \subset V_i$ with density $\alpha_i$ such that
\begin{equation}\label{eqn.itpos}
\Lambda_c(A) \geq |G:V_i|^{r-1}\Lambda_c(A_i) \textrm{ and } \alpha_i \geq \alpha_0.
\end{equation}
We begin by setting $A_0:=A$ and suppose that we have defined $A_i$ and $V_i$.  We apply Lemma \ref{lem.itbitty}.  If we are in the first or third cases we shall terminate.  If we are in the second case we have some $x \in V_i$ and $V_{i+1} \leq V_i$ of codimension $1$ such that
\begin{equation*}
\int{1_{x+A_i}d\P_{V_{i+1}}}\geq \alpha_i(1+\alpha_i^{(1-\nu(r))/(r-2)}).
\end{equation*}
We set $A_{i+1}:=(x+A_i)\cap V_{i+1}$.  Since $c_1+\dots+c_r=0$ we certainly have (\ref{eqn.itpos}). However, we also have that
\begin{equation*}
\alpha_{i+1} \geq \alpha_i(1+\Omega(\alpha_i^{(1-\nu(r))/(r-2)})).
\end{equation*}
It follows that after $I=O(\alpha_i^{-(1-\nu(r))/(r-2)}))$ iterations we have $\alpha_{i+I(i)} \geq 2\alpha_i$.  However, since the density is always at most $1$ the iteration must terminate within
\begin{equation*}
O(\alpha_0^{-(1-\nu(r))/(r-2)}))+O((2\alpha_0)^{-(1-\nu(r))/(r-2)}))+O((4\alpha_0)^{-(1-\nu(r))/(r-2)}))+\dots
\end{equation*}
steps.  Summing the geometric progression we see that we are either in the first or third cases of the lemma within
\begin{equation*}
O_r(\alpha^{-(1-\nu(r))/(r-2)})
\end{equation*}
iterations.  In the first case we see trivially that
\begin{eqnarray*}
\Lambda_c(A) \geq |G:V_i|^{r-1}\Lambda_c(A_i)& \geq& |G:V_i|^{r-1}\alpha_i^{r}/2\\ & \geq & \exp(-O_{|\F|,r}(\alpha^{-(1-\nu(r))/(r-2)})).
\end{eqnarray*}
On the other hand since $A$ contains no solutions to $c_1.x_1+\dots+c_r.x_r=0$ with $x_1,\dots,x_r \in A$ pair-wise distinct we see that
\begin{equation*}
\Lambda_c(A)=O_r(|G|^{-1})
\end{equation*}
and it follows that
\begin{equation}\label{eqn.out}
\alpha = O_{|\F|,r}(n^{(r-2)/(1-\nu(r))}).
\end{equation}
 Finally, if we terminate in the third case of the iteration lemma then we get a space $V \leq V_i$ such that
\begin{equation*}
|G:V| = |G:V_i|.|V_i:V| = O_{|\F|,r}(\alpha^{-(1-\nu(r))/(r-2)})
\end{equation*}
and the density of $A$ on $V$ is $\Omega(\alpha^{1/2})$.  If $\log |G:V| \geq \log |G|/2$ then it follows that we have the bound (\ref{eqn.out}) again; otherwise apply Theorem \ref{thm.cvs} to see that
\begin{equation*}
\alpha=O_{|\F|,r}(n^{2(r-2)}).
\end{equation*}
The result follows in view of the definition of $\nu$.
\end{proof}

\section*{Acknowledgements}

The author should like to thank Ben Green and Terry Tao for may useful conversations, Tomasz Schoen for bringing the paper \cite{sch::1} to the author's attention, Ilya Shkredov for remarks concerning Theorem \ref{thm.subexpshk}, and two anonymous referees for many useful remarks concerning generalisations and exposition.

\bibliographystyle{alpha}

\bibliography{references}

\end{document}